\documentclass[10pt]{amsart}
\usepackage[linktocpage=true,colorlinks=true,citecolor=blue]{hyperref}
\usepackage{amsrefs, amssymb,amsfonts,amsthm}
\usepackage{enumerate}
\usepackage[applemac]{inputenc}
\usepackage{wrapfig}

\usepackage{url}

\DefineSimpleKey{bib}{myurl}

\newcommand\myurl[1]{\url{#1}}

\BibSpec{webpage}{%
  +{}{\PrintAuthors} {author}
  +{,}{ \textit} {title}
  +{}{ \parenthesize} {date}
  +{,}{ \myurl} {myurl}
  +{,}{ } {note}
  +{.}{ } {transition}
}

\usepackage{graphicx}
\usepackage{float}

\newtheorem{thm}{Theorem}
\newtheorem*{cor*}{Corollary}

\newtheorem{lem}{Lemma}

\theoremstyle{definition}

\theoremstyle{remark}

\makeatletter
\makeatother

\DeclareMathOperator*{\Conv}{\raisebox{-0.6ex}{\scalebox{2.5}{$\ast$}}}

\title[Infinitely differentiable function]{An infinitely differentiable function with
compact support: Definition and Properties}
\author{J. Arias de Reyna}
\address{Univ.~de Sevilla \\
Facultad de Matem\'aticas \\
c/ Tarfia, sn
 \\
41012-Sevilla \\
Spain} 
\email{arias@us.es}

\thanks{ Received: November 5, 1980}

\begin{document}

\newcommand{\N}{{\mathbb N}}
\newcommand{\R}{{\mathbb R}}
\newcommand{\C}{{\mathbb C}}
\newcommand{\Z}{{\mathbb Z}}
\newcommand{\Q}{{\mathbb Q}}
\newcommand{\arctanh}{\mathop{\rm arctanh }}
\def\Re{\operatorname{Re}}
\def\Im{\operatorname{Im}}
\def\Hermite{\operatorname{\mathcal H}}
\def\supp{\operatorname{supp}}

\newfont{\cmbsy}{cmbsy10}
\newcommand{\Orden}{\mathop{\hbox{\cmbsy O}}\nolimits}


\maketitle

\section{Introduction.}

Infinitely differentiable functions of compact support defined on $\R$ play 
an important role in Analysis. Usually, one constructs   examples using an idea 
of Cauchy. For this example the derivatives are cumbersome. This problem makes
me search for a better example.

Looking at a rough plot of such a function and its derivative 
(see figure 1) I asked 
if it was possible that the derivative could be formed with two homothetic 
copies of the  same function translated conveniently.
So I posed  the following question:

\begin{wrapfigure}{r}{4.9cm}
\centering
\includegraphics[width=\hsize]{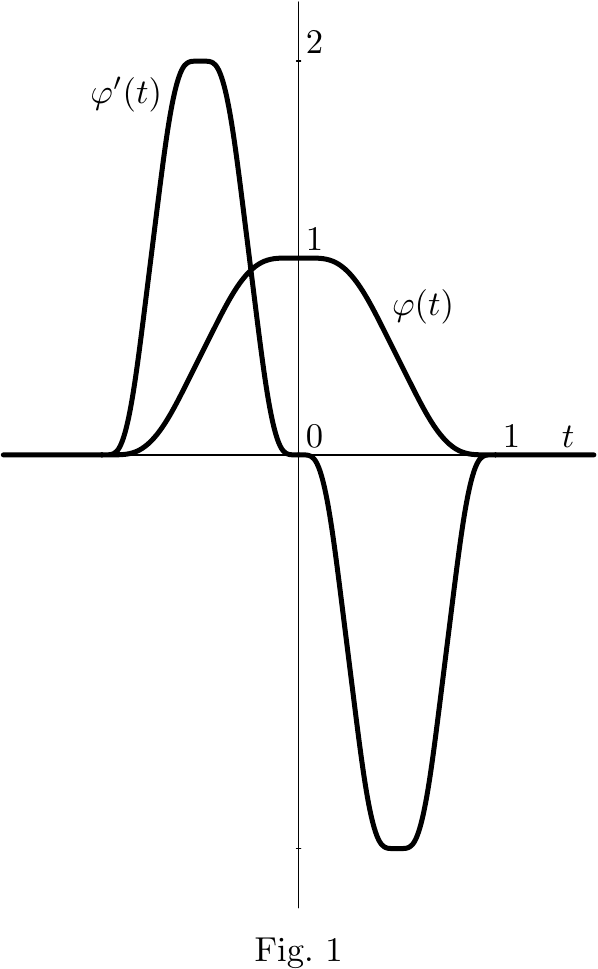}
\end{wrapfigure}
Does there exist a function $\varphi\in\mathcal D(\R)$ such that:
\begin{itemize}
\item[(a)] $\supp(\varphi)=[-1,1]$,
\item[(b)] $\varphi(t)>0$ for any $t\in(-1,1)$,
\item[(c)] $\varphi(0)=1$,
\item[(d)] and there is a constant $k>0$ such that for any $t\in\R$

$\displaystyle{\varphi'(t)=k\bigl(\varphi(2t+1)-\varphi(2t-1)\bigr)}$?

\end{itemize}
We will prove that there is a unique solution $\varphi$ satisfying the above 
conditions. For this unique solution the value of the constant  $k$ is $2$. No other value of 
$k$ gives a solution.

The function $\varphi$ has many other properties. It can be interpreted as 
a probability (theorem \ref{T:3}), $\varphi$ and some of its translates form a 
partition  of unity (theorem \ref{T:5}), its derivatives can be computed easily
(theorem \ref{T:4}), and the most notable, it is not a rational function but 
its values at dyadic points are rational numbers that are effectively computable.
Since its derivatives are related to the same function, not only the values 
of $\varphi$ but also those of its derivatives $\varphi^{(k)}(t)$ are rational number 
at dyadic points.

The only reference that we know about 
this function is a paper \cite{MR1501802} by Jessen and Wintner (1935) where
the function $\varphi$ is defined by means of its Fourier transform, as an 
example of an infinitely differentiable function, but Jessen and Wintner do 
not give any other property of this function.

\section{Existence and Unicity.}

\begin{thm}\label{T:1}
There is a unique  infinitely differentiable function with compact support $\varphi\colon\R\to\R$ and such that:
\begin{itemize}
\item[(a)] $\supp(\varphi)=[-1,1]$.
\item[(b)] $\varphi(t)>0$ for any $t$ in the open set $(-1,1)$.
\item[(c)] $\varphi(0)=1$.
\item[(d)] There is a constant $k>0$ such that for any $t\in\R$
\[
\varphi'(t)=k\bigl(\varphi(2t+1)-\varphi(2t-1)\bigr)
\]
\end{itemize}
and the constant $k$ appearing in (d) is necessarily equal to $2$.
\end{thm}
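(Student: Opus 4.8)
The plan is to work in the Fourier domain, since the functional equation in (d) is a scaling-and-translation relation that becomes a multiplicative product relation after taking Fourier transforms. Writing $\widehat{\varphi}(\xi)=\int_{\R}\varphi(t)e^{-i\xi t}\,dt$, I would transform (d). The right-hand side $\varphi(2t+1)-\varphi(2t-1)$ has Fourier transform $\frac{1}{2}e^{i\xi/2}\widehat{\varphi}(\xi/2)-\frac{1}{2}e^{-i\xi/2}\widehat{\varphi}(\xi/2)=i\sin(\xi/2)\widehat{\varphi}(\xi/2)$, while the left-hand side $\varphi'$ transforms to $i\xi\,\widehat{\varphi}(\xi)$. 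Thus (d) becomes
\[
i\xi\,\widehat{\varphi}(\xi)=k\,i\sin(\xi/2)\,\widehat{\varphi}(\xi/2),
\]
that is $\xi\,\widehat{\varphi}(\xi)=k\sin(\xi/2)\,\widehat{\varphi}(\xi/2)$. This is an infinite-product functional equation: iterating it and using $\widehat{\varphi}(0)=\int\varphi$ together with the normalization should force $\widehat{\varphi}(\xi)=\widehat{\varphi}(0)\prod_{n=1}^{\infty}\frac{k\sin(\xi/2^{n})}{\xi/2^{n-1}}$, which telescopes into an expression involving $\prod_{n\ge1}\cos(\xi/2^{n})$.

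The key computation is to pin down $k$. Since $\varphi$ is continuous with compact support, $\widehat{\varphi}$ is continuous, and near $\xi=0$ the relation $\xi\,\widehat{\varphi}(\xi)=k\sin(\xi/2)\widehat{\varphi}(\xi/2)$ at leading order gives $\xi\approx k(\xi/2)$, forcing $k=2$ as the only value consistent with a nonzero finite limit $\widehat{\varphi}(0)\ne0$ (note $\widehat{\varphi}(0)=\int_{-1}^{1}\varphi>0$ by (b)). With $k=2$ the recursion reads $\xi\,\widehat{\varphi}(\xi)=2\sin(\xi/2)\widehat{\varphi}(\xi/2)=2\sin(\xi/2)\cdot\frac{2\sin(\xi/4)}{\xi/2}\cdots$, and the classical identity $\prod_{n=1}^{\infty}\cos(\xi/2^{n})=\frac{\sin\xi}{\xi}$ lets me recognize the product. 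The upshot is that $\widehat{\varphi}$ is determined up to the constant $\widehat{\varphi}(0)$, which the normalization (c) fixes; this establishes uniqueness and the value of $k$ simultaneously.

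For existence, I would run the argument in reverse: \emph{define} $\varphi$ by the inverse Fourier transform of the infinite product obtained above (with $k=2$ and the correctly chosen constant), and then verify properties (a)--(d). Smoothness and rapid decay of $\widehat{\varphi}$ — the product decays fast because of the accumulating factors — give $\varphi\in C^{\infty}$; the support condition (a) requires checking that $\supp\varphi\subseteq[-1,1]$ (from the growth/analyticity of $\widehat{\varphi}$, via a Paley--Wiener type estimate) and that the support is not smaller, which follows from (b); positivity (b) is the delicate point. The functional equation (d) holds by construction of the product.

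The hard part will be establishing positivity (b), $\varphi(t)>0$ on $(-1,1)$, since this does not follow formally from the Fourier-side construction and genuinely uses the probabilistic structure: the infinite product $\prod_{n}\frac{\sin(\xi/2^{n})}{\xi/2^{n}}$ is the characteristic function of a sum $\sum_{n\ge1}X_{n}$ of independent random variables $X_{n}$ uniform on $[-2^{-n},2^{-n}]$, so $\varphi$ is (a scalar multiple of) the density of an absolutely continuous distribution supported on $[-1,1]$. Proving that this density is \emph{strictly} positive throughout the open interval — rather than merely nonnegative — is the crux; I expect to argue it via the convolution structure (convolving compactly supported densities spreads mass so that any interior point is reached). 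A secondary obstacle is making the support statement sharp (that $\supp\varphi$ is exactly $[-1,1]$, not a proper subinterval), which also comes out of this probabilistic interpretation once positivity on the open interval is in hand.
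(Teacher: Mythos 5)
Your uniqueness half is essentially the paper's own proof: Fourier transform turns (d) into $\widehat\varphi(\xi)=\tfrac{k}{2}\,\frac{\sin(\xi/2)}{\xi/2}\,\widehat\varphi(\xi/2)$, iteration produces the infinite product, and $\widehat\varphi(0)=\int\varphi>0$ forces $k=2$ and determines $\widehat\varphi$ up to the constant that (c) fixes through the inversion formula. Where you genuinely part ways with the paper is the existence half. You propose Paley--Wiener for $\supp\varphi\subseteq[-1,1]$ and a probabilistic convolution argument for positivity; the paper explicitly avoids Paley--Wiener, instead rewriting the product as $\prod_{m\ge1}\bigl(\cos(\pi z/2^m)\bigr)^m$ and realizing $\varphi\lambda$ as the weak-* limit of the discrete probability measures $\mu_m$ built from convolution powers of $\tfrac12\delta_{2^{-k-1}}+\tfrac12\delta_{-2^{-k-1}}$ (Lemma \ref{L:1}); that gives $\supp\varphi\subseteq[-1,1]$ and $\varphi\ge0$ at once, and the same construction is reused for Theorems \ref{T:2} and \ref{T:3}, which is the ``additional information'' the paper alludes to. For the crux you flag --- strict positivity --- the paper does not use the convolution structure at all: it first proves $\varphi(0)=1$ by the identity $\varphi(0)=\int_{-1}^{0}\varphi'(t)\,dt=2\int\varphi(2t+1)\,dt=\int\varphi(u)\,du=1$, then uses \eqref{E:13} to see that $\varphi$ is nondecreasing on $(-1,0)$, that $\varphi$ is even, and that $\varphi(x)>0$ implies $\varphi\bigl((x-1)/2\bigr)>0$, which propagates positivity from $0$ outward to all of $(-1,1)$. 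Your sketched route does close, provided you supply the one missing estimate: write $\varphi=f_m*g_m$, where $f_m$ is the density of the sum of the first finitely many uniform variables (continuous and strictly positive on the interior of its support, an interval of radius $1-2^{-m-1}$) and $g_m$ is the law of the tail, a probability measure carried by an interval of radius $2^{-m-1}$; then $\varphi(x)>0$ for $|x|<1-2^{-m}$, and letting $m\to\infty$ covers $(-1,1)$, after which sharpness of the support follows from positivity and continuity. One further simplification available to you: you need not prove $\varphi(0)=1$ for the normalized product, since rescaling the inverse transform to satisfy (c) preserves (a), (b), (d); the paper's computation shows the stronger fact that $\widehat\varphi(0)=1$ already yields $\varphi(0)=1$. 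In short, both routes work: yours leans on Paley--Wiener and a convolution estimate, while the paper's stays softer (weak-* compactness plus a two-line trick with the functional equation) and produces by-products exploited in the rest of the paper.
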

\begin{proof}
First, assuming that $\varphi$ exists, we will prove the unicity of $\varphi$ and that $k=2$.

Since $\varphi\in\mathcal D(\R)$ its Fourier transform is an entire function
\begin{equation}\label{E:1}
\widehat\varphi(z)=\int_\R \varphi(t)e^{-2\pi i t z}\,dt
\end{equation}
The Fourier transform of  $\varphi'(t)$, $\varphi(2t+1)$ and $\varphi(2t-1)$ are
\[2\pi i z \widehat\varphi(z),\quad e^{\pi i z}\widehat\varphi(\tfrac{z}{2}), 
\quad e^{-\pi i z}\widehat\varphi(\tfrac{z}{2})\]
respectively. Condition (d) yields
\begin{equation}\label{E:2}
\widehat\varphi(z)=\frac{k}{2}\frac{\sin\pi z}{\pi z}\widehat\varphi(\tfrac{z}{2}).
\end{equation}
By induction, we obtain from \eqref{E:2} that
\begin{equation}\label{E:3}
\widehat\varphi(z)=\Bigl(\frac{k}{2}\Bigr)^n\Bigl[\prod_{h=0}^n \frac{\sin\frac{\pi z}{2^h}}
{\frac{\pi z}{2^h}}\Bigr]\widehat\varphi\Bigl(\frac{z}{2^{n+1}}\Bigr).
\end{equation}
Conditions (a) and (b) imply that $\widehat\varphi(0)=\int\varphi(t)\,dt>0$, so that 
taking limits for $n\to\infty$ we obtain $k=2$ and 
\begin{equation}\label{E:4}
\widehat\varphi(z)=\widehat\varphi(0)\prod_{h=0}^\infty \frac{\sin\frac{\pi z}{2^h}}
{\frac{\pi z}{2^h}}.
\end{equation}
If there is a solution to our problem it is unique, because by the inversion formula 
\begin{equation}\label{E:5}
\varphi(t)=\int_\R\widehat\varphi(x)e^{2\pi i t x}\,dx
\end{equation}
and condition (c) will fix the value of the constant $\widehat\varphi(0)$. 

We will see later that (c) implies $\widehat\varphi(0)=1$, so that in what follows 
we will use $\widehat\varphi(z)$ to denote the function defined in \eqref{E:4}  assuming
$\widehat\varphi(0)=1$.

Now we will show that the solution $\varphi$ exists. We start from the function $\widehat\varphi(z)$
defined in \eqref{E:4}. Since the infinite product converges uniformly in compact sets, the function
$\widehat\varphi(z)$ is entire. Equation \eqref{E:2} may be used to expand it in power series
\begin{equation}\label{E:6}
\widehat\varphi(z)=\sum_{k=0}^\infty (-1)^k \frac{c_k}{(2k)!}(2\pi z)^{2k},
\end{equation}
where the $c_k$ are rational numbers defined by the recurrence
\begin{equation}\label{E:7}
(2k+1)2^{2k}c_k=\sum_{h=0}^k \binom{2k+1}{2h}c_h.
\end{equation}
From equation \eqref{E:7} we obtain that the numbers $c_k$ are positive. Also we have
\begin{equation}\label{E:8}
c_k=\frac{F_k}{(2k+1)(2k-1)\cdots 1}\prod_{n=1}^k (2^{2n}-1)^{-1},
\end{equation}
where $F_k$ are natural numbers, $F_0=1$, $F_1=1$, $F_2=19$, $F_3=2915$, $F_4=2\,788\,989$.

Using the known formulas
\[\frac{\sin z}{z}=\prod_{n=1}^\infty\cos\frac{z}{2^n},\quad \text{and}\quad 
\frac{\sin\pi z}{\pi z}=\prod_{n=1}^\infty \Bigl(1-\frac{z^2}{n^2}\Bigr),\]
we obtain 
\begin{equation}\label{E:9}
\widehat\varphi(z)=\prod_{m=1}^\infty \Bigl(\cos\frac{\pi z}{2^m}\Bigr)^m=\prod_{m=1}^\infty
\Bigl(1-\frac{z^2}{m^2}\Bigr)^{1+v_2(m)}, 
\end{equation}
where $v_2(m)$ is the greatest exponent such that $2^{v_2(m)}$ divides $m$. 

It is clear that $\widehat\varphi$ restricted to $\R$ is infinitely differentiable. We will 
show also that it is a rapidly decreasing function. 

Let $f(x)=(\sin x)/x$. For $x\in\R^*$, we have $|f(x)|\le 1$ and $|\sin x|\le 1$. For 
all $n$
\[|x^n\widehat\varphi(x)|=\Bigl|x^n \prod_{h=0}^\infty f(\pi x/2^h)\Bigr|\le \Bigl|x^n\prod_{h=0}^{n-1}
f(\pi x/2^h)\Bigr|\le 2^{\binom{n}{2}}\pi^{-n}.\]
It is easy to see that there is a constant $M_r\ge0$ for each $r\in\N$ such that 
\[|\partial^r f(\pi x/2^h)|\le \pi^r2^{-hr}M_r.\]
Applying the rule to differentiate an infinite product and the same idea 
used above to bound $|x^n\widehat\varphi(x)|$ we obtain
\begin{multline*}
|x^n \partial^r \widehat\varphi(x)|\le\\
\le \sum_S\frac{r!}{s_1!\cdots s_t!}\sum_H\Bigl|\prod_{i=1}^t \partial ^{s_i}f(\pi x/2^{h_i})\Bigr|\;
\Bigr|x^n\prod_{h\ne h_i}f(\pi x/2^h)\Bigr|\\
\le \sum_S\frac{r!}{s_1!\cdots s_t!}M_{s_1}\cdots M_{s_t}\Bigl(\sum_H\pi^r2^{-s_1h_1-\cdots-s_t h_t}\Bigr)2^{\binom{n+t}{2}}\pi^{-n}<\infty
\end{multline*}
where the sum extended to $S$ refers to all sets $\{s_1,\dots, s_t\}$ of natural numbers such 
that $s_1+\cdots+s_t=r$ and $s_i\ge1$ and the sum in $H$ to all sets $\{h_1,\dots, h_t\}$
of $t$ distinct  natural numbers. 

Once we have proved that $\widehat\varphi$ is a test function in Schwartz space we define $\varphi$ 
by means of equation \eqref{E:5}. It follows that $\varphi$ is infinitely differentiable 
and rapidly decreasing. Since $\widehat\varphi$ satisfies \eqref{E:2} with $k=2$, we obtain 
that $\varphi$ satisfies condition (d) with $k=2$.  We will show that $\varphi$ also satisfies
conditions (a), (b) and (c). Instead of using Paley-Wiener's Theorem we prefer 
to use another method,
which gives us some additional information.

Let $\mu_n$ be the Radon measure in $\R$ whose Fourier transform is
\begin{equation}\label{E:10}
\mathcal F(\mu_m)=\prod_{k=1}^m\Bigl(\cos\frac{\pi x}{2^k}\Bigr)^k.
\end{equation}

Since
\begin{equation}\label{E:11}
\mathcal F\bigl(\tfrac12\delta_{2^{-k-1}}+\tfrac12\delta_{-2^{-k-1}}
\bigr)=\cos\frac{\pi x}{2^k},
\end{equation}
$\mu_m$ is the convolution product 
\begin{equation}\label{E:12}
\mu_m=\Conv_{k=1}^\infty\bigl(\tfrac12\delta_{2^{-k-1}}+\tfrac12\delta_{-2^{-k-1}}
\bigr)^k
\end{equation}
where the powers have also the meaning of convolution products. 

It is clear that the total variation $\Vert \mu_m\Vert=1$, $\mu_m\ge0$ and 
$\supp(\mu_m)\subset[-1,1]$. The last assertion follows from
\[\sum_{k=1}^\infty \frac{k}{2^{k+1}}=1.\]

\begin{lem}\label{L:1}
Let  $(\mu_m)$ be the sequence of measures defined in \eqref{E:12}. This
sequence of measures converges in the weak-* topology $\sigma(\mathcal M_b(\R),C^*(\R))$
towards the measure $\varphi\lambda$ with density $\varphi$ with 
respect to Lebesgue measure $\lambda$.
\end{lem}

\begin{proof} Denote by
$C^*(\R)$ the Banach space of complex valued bounded functions defined on $\R$.
Since the measures $\mu_m$ are on the unit ball of the dual space, which is weakly compact, 
there is a measure $\mu$ that is a weak cluster point to the sequence $\mu_m$. Since
$\mathcal F(\mu_m)\to \mathcal F(\varphi\lambda)$ pointwise, we have $\mathcal F(\mu)=
\mathcal F(\varphi\lambda)$. Since $\mathcal F$ is injective in the space of bounded 
Radon measures, we obtain $\mu=\varphi\lambda$. Therefore $\varphi\lambda$ is the only 
weak cluster point, so that it is the weak limit of the sequence $\mu_m$.
\end{proof}

Since $\mu_m\to\varphi\lambda$ with weak convergence, it follows that $\varphi$ satisfies 
condition (a) and, since $\varphi$ is continuous it follows that $\varphi(x)\ge0$ for 
all $x\in\R$. 

Now we know that $\int\varphi(t)\,dt=\widehat\varphi(0)=1$. This fact, together 
with the fact that 
$\supp(\varphi)=[-1,1]$ yields
\begin{multline*}
\varphi(0)=\int_{-1}^0\varphi'(t)\,dt=\int_{-1}^02\bigl(\varphi(2t+1)-\varphi(2t-1)\bigr)\,dt\\
=2\int\varphi(2t+1)\,dt=\int\varphi(u)\,du=1.
\end{multline*}
and $\varphi$ satisfies condition (c).

It remains to show that $\varphi$ satisfies (b). By the same reasoning as above we have 
for every $x\in(-1,0)$
\begin{equation}\label{E:13}
\varphi(x)=2\int_{-1}^x \varphi(2t+1)\,dt.
\end{equation}
Therefore $\varphi(x)$ is not decreasing in $(-1,0)$ ( since $\varphi'(x)\ge0$ ).
Since $\varphi$ is an even function, $\varphi(x)>0$ implies $\varphi(t)>0$ for all 
$t\in(-x,x)$. If $\varphi(x)>0$ we have $\varphi((x-1)/2)>0$, therefore $\varphi(t)>0$
for $t\in(-1,1)$. 
\end{proof}

\section{Other expressions for $\varphi$.}
We have seen two possible definitions of $\varphi$: the expression \eqref{E:5} and 
that given in Lemma \ref{L:1}. We will give another two. One as the limit of a sequence of
step functions and another by means of an integral. We need some previous notations and
definitions.

Let $p_n$ be the sequence of polynomials defined by the recurrence
\begin{equation}\label{E:14}
p_0=1;\qquad p_n(x)=p_{n-1}(x^2)(1+x)^n.
\end{equation}
It is easy to see that 
\begin{equation}\label{E:15}
p_n(x)=\prod_{k=1}^n\Bigl(\frac{1-x^{2^k}}{1-x}\Bigr)
\end{equation}
The degree $g_n$ of $p_n$ is given by the equations
\begin{equation}\label{E:16}
g_0=0,\quad g_n=2g_{n-1}+n.
\end{equation}
Therefore
\begin{equation}\label{E:17}
\frac{g_n}{2^n}=\frac12+\frac{2}{2^2}+\cdots+\frac{n}{2^n}.
\end{equation}
Equations \eqref{E:12} and \eqref{E:14} show that $\mu_n$ is the measure
obtained when we substitute each power $x^m$ by $\delta_{\frac{2m-g_n}{2^{n+1}}}$ in the polynomial
\[2^{-\binom{n+1}{2}}p_n(x).\]
For each $n\in\N$, let $\varphi_n$ be the step function obtained from
the polynomial $2^{-\binom{n+1}{2}}p_n(x)$ substituting each power $x^m$ by the characteristic
function of the interval 
\[ \Bigl[\frac{2m-1-g_n}{2^{n+1}},\frac{2m+1-g_n}{2^{n+1}}\Bigr]\]
multiplied by $2^n$. We have then:
\begin{thm}\label{T:2}
$\varphi$ is the limit of the sequence of step functions $\varphi_m$.
\end{thm}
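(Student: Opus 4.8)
The plan is to read the statement as asserting uniform convergence $\varphi_m\to\varphi$, and to prove it by realising each step function $\varphi_m$ as a smoothing of the discrete measure $\mu_m$ and then comparing against $\varphi$ through Lemma~\ref{L:1}. First I would make precise the passage from $\mu_m$ to $\varphi_m$. Writing $t_j=\frac{2j-g_m}{2^{m+1}}$ and $I_j=\bigl[t_j-2^{-m-1},\,t_j+2^{-m-1}\bigr]$, the measure $\mu_m$ places mass $a_j$ (the coefficient of $x^j$ in $2^{-\binom{m+1}{2}}p_m(x)$) at $t_j$, whereas $\varphi_m$ equals $2^m a_j$ on $I_j$. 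Since the $I_j$ are consecutive intervals of length $2^{-m}$ and $u_m:=2^m\,\mathbf{1}_{[-2^{-m-1},2^{-m-1}]}$ is a probability density, replacing $\delta_{t_j}$ by $2^m\mathbf{1}_{I_j}$ is exactly convolution with $u_m$; hence
\[
\varphi_m\lambda=\mu_m*u_m .
\]
On the Fourier side this reads $\mathcal F(\varphi_m\lambda)=\mathcal F(\mu_m)\,\mathcal F(u_m)=\prod_{k=1}^m\bigl(\cos\tfrac{\pi x}{2^k}\bigr)^k\cdot\frac{\sin(\pi x/2^m)}{\pi x/2^m}$, which converges pointwise to $\widehat\varphi$, consistent with Lemma~\ref{L:1}.

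Second, I would reduce the claim to a local statement. Because $u_m$ is a probability measure supported in $[-2^{-m-1},2^{-m-1}]$ and $\varphi$ is continuous with compact support, hence uniformly continuous, the smoothed function $\varphi*u_m$ converges to $\varphi$ uniformly, with $\|\varphi*u_m-\varphi\|_\infty$ bounded by the modulus of continuity of $\varphi$ at $2^{-m-1}$. Thus it suffices to control $\varphi_m-\varphi*u_m$. Using $\varphi_m\lambda-(\varphi*u_m)\lambda=(\mu_m-\varphi\lambda)*u_m$ and evaluating the resulting density, for every $x$ one finds
\[
\varphi_m(x)-(\varphi*u_m)(x)=2^m\Bigl(\mu_m(J_x)-\int_{J_x}\varphi\Bigr),\qquad J_x=\bigl[x-2^{-m-1},x+2^{-m-1}\bigr].
\]

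The main obstacle is therefore the local, correctly scaled estimate $\sup_x 2^m\bigl|\mu_m(J_x)-\int_{J_x}\varphi\bigr|\to0$. This is a uniform refinement of the weak convergence $\mu_m\to\varphi\lambda$: it asks that the discrepancy between the two distribution functions be $o(2^{-m})$ on every window of width $2^{-m}$, which is strictly stronger than weak convergence alone and is where the real content lies. I expect to establish it by exploiting the self-similar recursion $\mu_m=\mu_{m-1}*\bigl(\tfrac12\delta_{2^{-m-1}}+\tfrac12\delta_{-2^{-m-1}}\bigr)^{m}$, which mirrors the functional equation (d) for $\varphi$ and should let one propagate a bound on the window discrepancy from scale $2^{-(m-1)}$ to scale $2^{-m}$. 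An alternative route is to use the (principal-value) Fourier representation $\varphi_m(x)-(\varphi*u_m)(x)=\int P_m(t)\bigl(1-R_m(t)\bigr)s_m(t)e^{2\pi i x t}\,dt$, where $P_m=\mathcal F(\mu_m)$, $s_m=\mathcal F(u_m)$ and $R_m(t)=\prod_{k>m}(\cos\tfrac{\pi t}{2^k})^k$ is the tail with $\widehat\varphi=P_mR_m$, and to estimate this integral by splitting at a suitable frequency $T_m$: for $|t|\le T_m$ one uses that $1-R_m$ is small since $R_m\to1$, while for $|t|>T_m$ one uses the decay and cancellation of the remaining factors.

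Combining the uniform bound from this step with $\varphi*u_m\to\varphi$ then yields $\|\varphi_m-\varphi\|_\infty\to0$, and convergence in $L^1$ and in the weak-$*$ sense follow a fortiori. The remaining points are routine bookkeeping: the total masses coincide, $\int\varphi_m=\mathcal F(\varphi_m\lambda)(0)=1$, and the supports of $\varphi_m$ converge to $[-1,1]$ because $g_m/2^{m+1}\to\tfrac12\sum_{k\ge1}k2^{-k}=1$, in agreement with $\supp(\varphi)=[-1,1]$.
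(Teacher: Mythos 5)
Your reduction is valid: $\varphi_m\lambda=\mu_m*u_m$, hence $\varphi_m-\varphi*u_m=(\mu_m-\varphi\lambda)*u_m$, whose value at $x$ is $2^m\bigl(\mu_m(J_x)-\int_{J_x}\varphi\bigr)$, and $\varphi*u_m\to\varphi$ uniformly by uniform continuity. But the argument stops exactly at the point you yourself flag as ``where the real content lies'': the uniform window estimate $\sup_x2^m\bigl|\mu_m(J_x)-\int_{J_x}\varphi\bigr|\to0$ is never established, only two possible attack routes are named, and neither is routine. In the Fourier route the trivial bounds are already useless: with $|P_m|\le1$, $|s_m|\le1$ and $|1-R_m(t)|\lesssim\min(1,t^2m4^{-m})$, the low-frequency contribution is controlled only by
\[
\int_0^{T_m}\min\bigl(1,t^2m4^{-m}\bigr)\,dt,
\]
which diverges like $2^mm^{-1/2}$ as soon as $T_m\gtrsim 2^mm^{-1/2}$, while for smaller $T_m$ the tail $|t|>T_m$ is out of control because $P_m$ is almost periodic with no uniform decay (e.g.\ $P_m(2^{m+1})=1$) and $|s_m(t)|\lesssim2^m/|t|$ is not integrable; so genuine cancellation in $P_m$ would have to be quantified. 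The recursion route likewise requires showing that convolving with an $m$-fold Bernoulli convolution of step $2^{-m-1}$ does not amplify window discrepancies, and no mechanism for this is offered. As it stands, the proposal is a correct reformulation of the theorem plus an unproven (and quantitatively delicate) claim: a genuine gap.

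The idea you are missing --- and the one the paper's proof rests on --- is monotonicity. From \eqref{E:19}, $p_m$ is a product of polynomials with symmetric, unimodal, nonnegative coefficient sequences, so the coefficients of $p_m$ are symmetric and unimodal; hence $\varphi_m$ is nondecreasing on $(-1,0)$, nonincreasing on $(0,1)$, and $\varphi_m(0)=1$. Lemma \ref{L:1} together with the portmanteau theorem (the limit $\varphi\lambda$ is absolutely continuous, so interval endpoints carry no mass) gives $\mu_m(f)\to\int\varphi f$ for $f$ the indicator of any dyadic interval, and $\int\varphi_m f$ differs from $\mu_m(f)$ by at most the mass of two boundary cells, which is $O(2^{-m})$. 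Then for $x\in(-1,0)$ and dyadic points $a<b\le x\le c<d$ in $(-1,0)$, monotonicity squeezes
\[
\frac1{b-a}\int_a^b\varphi_m\;\le\;\varphi_m(x)\;\le\;\frac1{d-c}\int_c^d\varphi_m,
\]
so letting $m\to\infty$ and then shrinking the two intervals, continuity of $\varphi$ yields $\varphi_m(x)\to\varphi(x)$; uniform convergence follows by the standard P\'olya argument for monotone functions converging pointwise to a continuous limit on a compact interval. Monotonicity is precisely what upgrades the weak convergence of Lemma \ref{L:1} to the uniform local statement you identified as the obstacle, at no quantitative cost; without it (or some substitute), your outline cannot be completed as written.
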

\begin{proof}
It suffices to observe that for a characteristic function $f$ of an interval with dyadic 
extremes, we have
\[\lim_{m\to\infty}\mu_m(f)=\lim_{m\to\infty}\int\varphi_m f=\int\varphi f,\]
and the fact, easily proved, that $\varphi_m$ is monotonous non decreasing in $(-1,0)$
and monotonous not increasing in $(0,1)$, and that $\varphi_m(0)=1$. 
\end{proof}

It is easy to see that 
\begin{equation}\label{E:18}
p_{m+1}(x)=p_m(x)(1+x+x^2+\cdots+x^{2^{m+1}-1})
\end{equation}
This gives us an easy algorithm to obtain the $\varphi_m$, and also shows that
\begin{equation}\label{E:19}
p_{m}(x)=(1+x)(1+x+x^2+x^3)\cdots(1+x+\cdots+x^{2^m-1}).
\end{equation}
Therefore we have a combinatorial interpretation  of the coefficient of $x^r$ in $p_m(x)$:

\emph{The coefficient of $x^r$ in $p_m(x)$ is the number of partitions of $r$, in $m$ parts
$r=s_1+s_2+\cdots+s_m$ such that $0\le s_i\le 2^i-1$}.

\begin{thm}\label{T:3}
Let $\sigma=\bigotimes_{k=1}^\infty \lambda_k$ be the measure defined on $[0,1]^\N$, $\lambda_k$
being the Lebesgue measure on $[0,1]$. For $-1\le x\le 0$ we have
\[\varphi(x)=\sigma\Bigl\{(x_k)\colon 0\le \sum_{k=1}^\infty\frac{x_k}{2^k}\le x+1\Bigr\}\]
\end{thm}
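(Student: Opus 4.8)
The plan is to read both sides as distribution functions of two closely related random variables, to reduce the equality to a single distributional identity, and then to feed that into the integral relation for $\varphi$ obtained at the end of the proof of Theorem \ref{T:1}. By \eqref{E:11}, \eqref{E:12} and Lemma \ref{L:1}, the measure $\varphi\lambda$ is the law of the almost surely convergent series
\[
Y=\sum_{k=1}^\infty 2^{-(k+1)}\sum_{i=1}^k\varepsilon_{k,i},
\]
where the $\varepsilon_{k,i}$ are independent symmetric $\pm1$ signs, the $k$-th convolution factor in \eqref{E:12} contributing $k$ of them. Under $\sigma$ the coordinates $x_k$ are independent and uniform on $[0,1]$; writing $S=\sum_{k\ge1}x_k2^{-k}$ and letting $F_S(y)=\sigma\{S\le y\}$ be its distribution function, the right-hand side is $\sigma\{0\le S\le x+1\}=F_S(x+1)$, the lower constraint being vacuous because $S\ge0$ identically. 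It therefore suffices to prove $\varphi(x)=F_S(x+1)$ on $[-1,0]$.

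The core of the argument is the distributional identity $2S-1\stackrel{d}{=}Y$. I would obtain it by expanding each coordinate in base two: writing $x_k=\sum_{j\ge1}B_{k,j}2^{-j}$ with independent Bernoulli digits $B_{k,j}$, the variables $\eta_{k,j}=2B_{k,j}-1$ are independent symmetric signs and $2x_k-1=\sum_{j\ge1}\eta_{k,j}2^{-j}$. Since $\sum_k2^{-k}=1$, summation gives
\[
2S-1=\sum_{k,j\ge1}\eta_{k,j}2^{-(k+j)}=\sum_{m=2}^\infty 2^{-m}\sum_{k+j=m}\eta_{k,j},
\]
so at dyadic level $m$ the coefficient $2^{-m}$ carries exactly $m-1$ independent signs; setting $l=m-1$ this reads $\sum_{l\ge1}2^{-(l+1)}(\text{a sum of }l\text{ signs})$, which is precisely the structure of $Y$. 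Hence $2S-1$ and $Y$ have the same law. (Alternatively, with $\psi_S(z)=E[e^{-2\pi i zS}]=\prod_{k\ge1}e^{-\pi i z/2^k}\frac{\sin(\pi z/2^k)}{\pi z/2^k}$, the characteristic function of $2S-1$ is $e^{2\pi i z}\psi_S(2z)$, whose phases cancel and whose product collapses to $\prod_{h\ge0}\frac{\sin(\pi z/2^h)}{\pi z/2^h}=\widehat\varphi(z)$ by \eqref{E:4}.)

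Finally I would combine this with the relation used at the end of the proof of Theorem \ref{T:1}. Substituting $u=2t+1$ in \eqref{E:13} gives, for $x\in(-1,0)$,
\[
\varphi(x)=\int_{-1}^{2x+1}\varphi(u)\,du=F_Y(2x+1),
\]
because $\varphi$ is the density of $Y$ and $\supp(\varphi)=[-1,1]$. Since $2S-1\stackrel{d}{=}Y$, we have $F_Y(2x+1)=F_S\bigl(\tfrac{(2x+1)+1}{2}\bigr)=F_S(x+1)$, which is the desired equality on $(-1,0)$; the endpoints follow by continuity, as $\varphi(-1)=0=F_S(0)$ (the event $S=0$ is $\sigma$-null) and $\varphi(0)=1=F_S(1)$ (since $S\le1$). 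I expect the main obstacle to be the identity $2S-1\stackrel{d}{=}Y$, and within it the level-by-level bookkeeping of the signs: the equality holds only because level $m$ of $2S-1$ carries $m-1$ signs whereas level $l=m-1$ of $Y$ carries $l$, so the two series coincide exactly after this reindexing.
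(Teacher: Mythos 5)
Your proof is correct and takes essentially the same route as the paper: both identify $\varphi\lambda$ (via Lemma \ref{L:1} and \eqref{E:12}) with the law of an almost surely convergent series of scaled independent signs, use the binary-digit correspondence between sign sequences and uniform random variables to equate that law with the law of $2S-1$, and conclude with the integral relation \eqref{E:13}. The only difference is organizational: you expand the uniforms into signs and make the diagonal regrouping $k+j=m$ explicit, whereas the paper collapses each row of signs into a uniform variable and leaves that same regrouping implicit in the assertion that $\varphi\lambda$ is the law of $\sum_{k,m}t_{k,m}$.
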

\begin{proof}
Let $\nu_k$ be the measure in $[-1,1]^\N$
\[\nu_k=\bigotimes_{m=1}^\infty \bigl(\frac12\delta_{2^{-m-k}}+\frac12\delta_{-2^{-m-k}}\bigr)\]
($k=1$ $2$, \dots, ) and let $(t_{k,1}, t_{k,2},\dots)$ denote the variables in the space
$[-1,1]^\N$.

Let $\mu$ be the measure defined on $\{0,1\}^\N$ as the product of the measure 
assigning $0$ and $1$ measure $1/2$. 

Then $\nu_k=f_k(\mu)$ the image measure, with $f_k\{0,1\}^\N\to[-1,1]^\N$ given by 
$f_k(\varepsilon_1,\varepsilon_2,\dots)=(t_{k,1}, t_{k,2},\dots)$ where
\[t_{k,m}=\begin{cases}
2^{-m-k} & \text{when $\varepsilon_m=1$},\\
-2^{-m-k}  & \text{when $\varepsilon_m=0$}.
\end{cases}\]
$\mu$ is also the image measure of Lebesgue measure on $[0,1]$ by the application 
$g\colon[0,1]\to \{0,1\}^\N$ defined by $g(x)=(\varepsilon_1,\varepsilon_2,\dots)$
if $x=\sum_{m=1}^\infty (\varepsilon_m/2^m)$ with $\varepsilon_m\in\{0,1\}$. The function
$g$ is well defined only almost everywhere but this is no difficulty. 

The measure $\varphi(t)\,dt$ is the limit of the $\mu_m$, therefore for all integrable
$f$, 
\[\int f(t)\varphi(t)\,dt=\int f\Bigl(\sum t_{k,m}\Bigr)\,d\bigotimes_{k=1}^\infty \nu_k.\]
Since each $\nu_k$ is an image measure the last integral can be transformed in an 
integral on $[0,1]^\N$ with respect to the measure $\sigma=\otimes _{k=1}^\infty\lambda$.

The relation $f_k\circ g(x_k)=(t_{k,1},t_{k,2},\dots)$ implies 
$x_k=\sum_{m=1}^\infty (\varepsilon_m/2^m)$ with $\varepsilon_m\in\{0,1\}$, 
$t_{k,m}=2^{-m-k}$ if $\varepsilon_m=1$ and $t_{k,m}=-2^{-m-k}$ when $\varepsilon_m=0$.
Therefore
\[\sum_m t_{k,m}=\sum_{m=1}^\infty \varepsilon_m2^{-m-k}-
\Bigl(\sum_{m=1}^\infty 2^{-m-k}-\sum_{m=1}^\infty \varepsilon_m2^{-m-k}\Bigr)=
x_k2^{-k+1}-2^{-k}\]
From this we get
\[\int f(t)\varphi(t)\,dt=\int f\Bigl(\sum_{k=1}^\infty x_k2^{-k+1}-1\Bigr)\,d\sigma.\]
Taking $f(t)=\chi_{[-1,2x+1]}(t)$ with $-1\le x\le 0$,
\begin{multline}\label{E:20}
\varphi(x) =\int_{-1\le \sum_{k=1}^\infty x_k2^{-k+1}-1\le 2x+1} d\sigma=
\int_{0\le \sum_{k=1}^\infty x_k2^{-k}\le x+1} d\sigma\\=
\sigma\Bigl\{(x_k)\colon 0\le \sum_{k=1}^\infty x_k2^{-k}\le x+1\Bigr\}
\end{multline}

In other words we have proved the Proposition: \emph{Let $x_k$ be independent random variables  uniformly distributed in 
$[0,1]$, $\varphi(x)$ (with $-1\le x\le 0$) is equal to 
the probability that the sum $\sum x_k2^{-k}$  be $\le x+1$}. 
\end{proof}

\section{Properties.}
\begin{thm}\label{T:4}
Let 
\[\theta(t)=\sum_{k=0}^\infty (-1)^{s(k)}\varphi(t-2k-1)\]
where $s(k)$ denotes the sum of the digits of $k$ when written in base 2.
Then
\begin{itemize}
\item[(a)] $\theta$ is an infinitely differentiable function.
\item[(b)] $\theta'(t)=2\theta(2t)$.
\item[(c)] For $t\in[-1,1]$, $\varphi^{(k)}(t)=2^{\binom{k+1}{2}}\theta(2^k t+2^k)$.
\end{itemize}
\end{thm}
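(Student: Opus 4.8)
The plan is to handle (a) by a local finiteness argument, to derive (b) directly from the functional equation (d) together with two elementary identities on the binary digit-sum $s$, and to obtain (c) by induction on $k$ using (b).

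For (a), I would first note that since $\supp(\varphi)=[-1,1]$, the $k$-th summand $\varphi(t-2k-1)$ is supported on $[2k,2k+2]$. Consecutive supports meet only at the points $t=2k+2$, where both adjacent translates vanish (they equal $\varphi(1)$ and $\varphi(-1)$). Hence in a neighborhood of any point the series has at most one nonzero term, so locally $\theta$ is a signed translate of the smooth function $\varphi$; in particular $\theta\in C^\infty(\R)$ and $\theta(t)=0$ for $t\le 0$. Local finiteness also legitimizes termwise differentiation, which I will use repeatedly.

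For (b), differentiate termwise and insert the functional equation $\varphi'(u)=2\bigl(\varphi(2u+1)-\varphi(2u-1)\bigr)$ at $u=t-2k-1$, giving
\[
\theta'(t)=2\sum_{k=0}^\infty(-1)^{s(k)}\bigl[\varphi(2t-2(2k)-1)-\varphi(2t-2(2k+1)-1)\bigr].
\]
On the other hand, splitting $2\theta(2t)=2\sum_{j}(-1)^{s(j)}\varphi(2t-2j-1)$ into even and odd $j$ and using the identities $s(2k)=s(k)$ and $s(2k+1)=s(k)+1$ produces exactly the same double series. This identification is the crux of the argument: everything rests on how doubling and incrementing act on $s$, which is precisely what the definition of $\theta$ is engineered to exploit.

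For (c), I would induct on $k$. The base case $k=0$ reads $\varphi(t)=\theta(t+1)$ on $[-1,1]$; evaluating $\theta(t+1)=\sum_k(-1)^{s(k)}\varphi(t-2k)$ at $t\in[-1,1]$ leaves only the $k=0$ term, which is $\varphi(t)$ since $s(0)=0$. For the inductive step, differentiate the hypothesis $\varphi^{(k)}(t)=2^{\binom{k+1}{2}}\theta(2^kt+2^k)$ and apply (b) with argument $s=2^kt+2^k$ to get
\[
\varphi^{(k+1)}(t)=2^{\binom{k+1}{2}}\cdot 2^{k}\cdot 2\,\theta(2^{k+1}t+2^{k+1}),
\]
and the proof closes with the exponent arithmetic $\binom{k+1}{2}+k+1=\binom{k+2}{2}$. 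I expect no genuine obstacle beyond the bookkeeping in (b); the one point requiring care is that the identity in (c) is asserted only on $[-1,1]$, so one differentiates the equality of smooth functions on the interior $(-1,1)$ and extends to the endpoints by continuity of $\varphi^{(k+1)}$ and $\theta$.
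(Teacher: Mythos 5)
Your proof is correct and follows essentially the same route as the paper: local finiteness of the sum for (a), termwise differentiation of the functional equation combined with the digit-sum identities $s(2k)=s(k)$ and $s(2k+1)=s(k)+1$ for (b), and iteration of (b) together with $\varphi(t)=\theta(t+1)$ on $[-1,1]$ for (c), which is the same induction the paper phrases as ``repeated differentiation.'' One small imprecision that does not affect validity: near a junction point $t=2k+2$ \emph{two} translates (not one) are non-trivial on every neighborhood, but the sum is still locally finite and both terms vanish there with all their derivatives, so your conclusion stands.
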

\begin{proof}
The sum in the definition of $\theta(t)$ is locally finite, therefore $\theta$ is 
infinitely differentiable and its derivative is
\begin{multline*}
\theta'(t)=\sum_{k=0}^\infty(-1)^{s(k)}2
\bigl(\varphi(2t-4k-2+1)-\varphi(2t-4k-2-1)\bigr)\\=
2\sum_{k=0}^\infty\bigl((-1)^{s(k)}\varphi(2t-2(2k)-1)
-(-1)^{s(k)}\varphi(2t-2(2k+1)-1)\bigr)
\end{multline*}
\begin{figure}[H]
\centering
\includegraphics[width=\hsize]{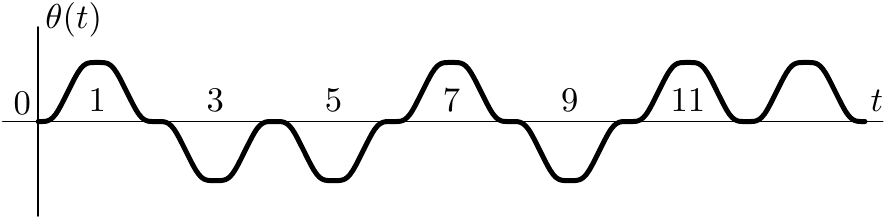}
\end{figure}
\noindent using the definition of $s(k)$ this yields
\begin{equation}\label{E:21}
\theta'(t)=2\theta(2t).
\end{equation}
By repeated differentiation of \eqref{E:21} we obtain
\begin{equation}\label{E:22}
\theta^{(k)}(t)=2^{\binom{k+1}{2}}\theta(2^kt).
\end{equation}
For $t\in[-1,1]$ we have $\varphi(t)=\theta(t+1)$ so that
\begin{equation}\label{E:23}
\varphi^{(k)}(t)=2^{\binom{k+1}{2}}\theta(2^kt+2^k),\qquad \text{if}\quad t\in[-1,1].
\end{equation}
\end{proof}
This proves that on any dyadic point $t=q/2^n$ the Taylor expansion is a polynomial
\begin{equation}\label{E:24}
T(t,x)=\sum_{k=0}^n \frac{\varphi^{(k)}(t)}{k!} x^k
\end{equation}
and for $q$  odd the degree of $T(t,x)$ is $n$. 
\begin{cor*}
The function $\varphi$ is not analytic on any point of the interval $[-1,1]$.
\end{cor*}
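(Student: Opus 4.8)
The plan is to exploit the explicit derivative formula from Theorem~\ref{T:4}, namely $\varphi^{(k)}(t)=2^{\binom{k+1}{2}}\theta(2^k t+2^k)$ for $t\in[-1,1]$, and to read off from it that the derivatives grow too fast for the Taylor series to converge. Since $\binom{k+1}{2}\sim k^2/2$, the factor $2^{\binom{k+1}{2}}$ grows like $2^{k^2/2}$, which dwarfs $M^k k!$ for every $M$ (recall $(k!)^{1/k}\sim k/e$). Thus, as soon as I can bound $|\theta(2^k t+2^k)|$ from below by a positive constant along an infinite set of indices $k$, the Cauchy--Hadamard formula will give radius of convergence $0$, so the Taylor series of $\varphi$ at $t$ diverges and $\varphi$ cannot be analytic there.

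First I would analyze $\theta$ itself. From its definition $\theta(t)=\sum_{k\ge0}(-1)^{s(k)}\varphi(t-2k-1)$ and the fact that $\supp\varphi=[-1,1]$, on each interval $[2n,2n+2]$ only the single term $k=n$ survives, so $\theta(t)=(-1)^{s(n)}\varphi(t-2n-1)$ there; in particular $|\theta(t)|=\varphi(t-2n-1)$. Writing $u=t+1$ and $w=u/2$, a short computation (with $n_k=\lfloor 2^{k-1}u\rfloor$ and offset $2\{2^{k-1}u\}=2\{2^k w\}$) shows $|\theta(2^k u)|=\varphi\bigl(2\{2^k w\}-1\bigr)$, where $\{\cdot\}$ denotes the fractional part. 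So everything reduces to controlling the orbit of $w=(t+1)/2$ under the doubling map.

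Next I would treat the non-dyadic points. If $t$ is such that $w=(t+1)/2$ is not a dyadic rational, its binary expansion $w=0.b_1b_2\cdots$ is not eventually constant, so $b_{k+1}\ne b_{k+2}$ for infinitely many $k$; for each such $k$ one checks $\{2^k w\}\in[1/4,3/4]$, hence $2\{2^k w\}-1\in[-1/2,1/2]$ and $|\theta(2^k u)|\ge c:=\min_{[-1/2,1/2]}\varphi>0$ (positive by condition~(b) and continuity). For those indices $|\varphi^{(k)}(t)|\ge c\,2^{\binom{k+1}{2}}$, which forces the radius of convergence to be $0$; thus $\varphi$ is non-analytic at every non-dyadic $t$.

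Finally, to upgrade this to all of $[-1,1]$ I would invoke that the set of points of analyticity of a function is open: if $\varphi$ were analytic at some $t_0$, it would be analytic throughout a neighborhood, but every neighborhood meets the dense set of non-dyadic points, contradicting the previous step. (Equivalently, at a dyadic point the derivatives eventually vanish and the Taylor series is the finite polynomial noted above, so analyticity there would make $\varphi$ coincide with a polynomial near $t_0$, hence analytic at nearby non-dyadic points --- again a contradiction.) I expect the only delicate step to be the lower bound along a subsequence, i.e. showing the doubling orbit of $(t+1)/2$ returns to $[1/4,3/4]$ infinitely often; the digit-change argument settles it, the genuinely special case being exactly the dyadic points, which the openness/density argument absorbs.
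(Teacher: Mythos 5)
Your proof is correct, but it takes a genuinely different route from the paper's. The paper's argument is the one implicit in the remark immediately preceding the corollary: at a dyadic point $t=q/2^n$ with $q$ odd, the argument $2^kt+2^k$ appearing in Theorem~\ref{T:4}(c) is an even integer for $k>n$ (where $\theta$ vanishes, since all translates $\varphi(\cdot-2m-1)$ vanish at even integers) and an odd integer for $k=n$ (where $\theta=\pm\varphi(0)=\pm1$); hence the Taylor series of $\varphi$ at such a point is the polynomial \eqref{E:24} of exact degree $n$. Analyticity at any point of $[-1,1]$ would force $\varphi$ to coincide with such a polynomial on an interval, which is impossible because finer dyadic points inside that interval carry non-vanishing derivatives of order $n'>n$; density of dyadic points then covers every point. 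You invert the roles of the two kinds of points: your key lemma is a quantitative lower bound $|\theta(2^k(t+1))|\ge c>0$ for infinitely many $k$ at every \emph{non-dyadic} $t$, obtained by tracking the binary digits of $w=(t+1)/2$ under the doubling map (your reduction $|\theta(2^k(t+1))|=\varphi(2\{2^kw\}-1)$, the digit-change estimate $\{2^kw\}\in[1/4,3/4]$ when $b_{k+1}\ne b_{k+2}$, and the compactness argument giving $c=\min_{[-1/2,1/2]}\varphi>0$ are all sound), after which Cauchy--Hadamard and the super-factorial growth of $2^{\binom{k+1}{2}}$ give radius of convergence $0$; the dyadic points are then absorbed by the openness of the set of points of analyticity and the density of non-dyadic points. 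Each route buys something: the paper's is shorter and needs no estimates, only the exact values $\theta(\text{even})=0$, $\theta(\text{odd})=\pm1$, and it yields the precise Taylor polynomial at dyadic points; yours yields the stronger conclusion that at every non-dyadic point the Taylor series of $\varphi$ actually diverges off-center, rather than merely failing to represent $\varphi$.
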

\begin{thm}\label{T:5}
For $u>0$ and $t\in\R$ we have
\begin{equation}\label{E:25}
\sum_{k\in\Z}\varphi(t+uk)=\sum_{k\in\Z}\frac{1}{u}
\widehat\varphi\Bigl(\frac{k}{u}\Bigr)e^{2\pi i k\frac1u}.
\end{equation}
\end{thm}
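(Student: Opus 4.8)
The plan is to read \eqref{E:25} as the Poisson summation formula for $\varphi$ and to prove it by expanding the periodization of $\varphi$ in a Fourier series. Fix $u>0$ and set $F(t)=\sum_{k\in\Z}\varphi(t+uk)$. Since $\supp(\varphi)=[-1,1]$ by Theorem~\ref{T:1}(a), for every bounded range of values of $t$ only finitely many terms of this sum are nonzero; hence the sum is locally finite, $F$ is a well-defined $C^\infty$ function, and it is evidently periodic with period $u$.

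I would then write $F(t)=\sum_{n\in\Z}c_n e^{2\pi i nt/u}$ and compute the Fourier coefficients
\[
c_n=\frac1u\int_0^u F(t)e^{-2\pi i nt/u}\,dt=\frac1u\sum_{k\in\Z}\int_0^u\varphi(t+uk)e^{-2\pi i nt/u}\,dt,
\]
where the interchange of sum and integral is trivial because on $[0,u]$ the sum is finite. The substitution $s=t+uk$ turns the $k$-th integral into $\int_{uk}^{u(k+1)}$, and since $e^{2\pi i nk}=1$ the phases reassemble into a single integral over $\R$:
\[
c_n=\frac1u\int_\R\varphi(s)e^{-2\pi i ns/u}\,ds=\frac1u\,\widehat\varphi\!\Bigl(\frac nu\Bigr),
\]
by the definition \eqref{E:1} of $\widehat\varphi$. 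Substituting back yields $F(t)=\sum_{n\in\Z}\tfrac1u\widehat\varphi(n/u)\,e^{2\pi i nt/u}$, which is the assertion \eqref{E:25} (the right-hand exponential carrying the factor $e^{2\pi i nt/u}$).

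The only genuine point to check is that the Fourier series of $F$ actually converges to $F$, and I expect this to be the sole obstacle. It is dispatched at once because $F$ is $C^\infty$ and $u$-periodic, so its Fourier series converges absolutely and uniformly to it; equivalently, the rapid decay of $\widehat\varphi$ established in the proof of Theorem~\ref{T:1} (where $\widehat\varphi$ was shown to lie in Schwartz space) gives $\sum_n|\widehat\varphi(n/u)|<\infty$ directly, and smoothness of $F$ forces the series to its correct limit. Finally, the partition-of-unity statement advertised in the introduction drops out by specialization: the zeros of $\widehat\varphi$ are exactly the nonzero integers (visible from the product \eqref{E:9}), so taking $u=1$ annihilates every term with $n\neq0$ and leaves $\sum_{k\in\Z}\varphi(t+k)=\widehat\varphi(0)=1$.
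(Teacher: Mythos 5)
Your proof is correct and follows essentially the same route as the paper: both arguments periodize $\varphi$, observe that the compact support makes the sum locally finite (hence $C^\infty$ and $u$-periodic), and compute the Fourier coefficients via the substitution $s=t+uk$ to recover $\tfrac1u\widehat\varphi(n/u)$. Your explicit justification that the Fourier series of the smooth periodic function converges to it, and the $u=1$ specialization to the partition of unity, merely make explicit what the paper leaves implicit.
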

\begin{proof}
The left hand side of \eqref{E:25} is locally finite, therefore the sum is 
infinitely differentiable. It is a periodic function of $t$ with period $u$. Therefore
it has a Fourier series expansion
\[\sum_{k\in\Z}\varphi(t+uk)=\sum_{k\in\Z}a_k e^{2\pi i k\frac1u}\]
where
\begin{multline*}
a_n=\frac{1}{u}\int_0^u\sum_{k\in\Z}\varphi(t+uk)e^{-2\pi i n\frac{t}{u}}\,dt=
\sum_{k\in\Z}\frac{1}{u}\int_0^u\varphi(t+uk)e^{-2\pi i n\frac{t}{u}}\,dt\\=
\sum_{k\in\Z}\frac{1}{u}\int_{uk}^{u(k+1)}\varphi(v)e^{-2\pi i n\frac{v-uk}{u}}\,dv=
\frac{1}{u}\int\varphi(v)e^{-2\pi i v\frac{n}{u}}\,dv=
\frac{1}{u}\widehat\varphi\Bigl(\frac{n}{u}\Bigr).
\end{multline*}
\end{proof}

Some particular cases of \eqref{E:25} are interesting:
\begin{equation}\label{E:26}
\sum_{k\in\Z}\varphi\Bigl(t+\frac{k}{n}\Bigr)=n \qquad \text{for }\quad n\in\N.
\end{equation}
Furthermore
\begin{equation}\label{E:27}
\sum_{k\in\Z}\varphi(t+k)=1.
\end{equation}
which is equivalent to 
\begin{equation}\label{E:28}
\varphi(t)+\varphi(t-1)=1,\qquad \text{for }\quad t\in[0,1].
\end{equation}

Also, from \eqref{E:25} it follows that
\begin{equation}\label{E:29}
\sum_{k\in\Z}\varphi(t+2k)=\frac12\sum_{k\in\Z}
\widehat\varphi\Bigl(\frac{k}{2}\Bigr)e^{\pi i k t},
\end{equation}
which  is no more than the Fourier expansion
\begin{equation}\label{E:30}
\varphi(t)=\frac12+\sum_{k=0}^\infty \widehat\varphi\Bigl(\frac{2k+1}{2}\Bigr)
\cos(2k+1)\pi t,
\end{equation}
valid for $t\in[-1,1]$ and which has good convergence properties.

The product \eqref{E:9} implies that the sign of the coefficient $\widehat\varphi((2k+1)/2)$
is the parity of $1+v_2(1)+1+v_2(2)+\cdots+1+v_2(k)=k+v_2(k!)=s(k)$, therefore also equal
to the sign of $\theta(k)$.

Equation \eqref{E:25} is not only a Fourier expansion, it is also Poisson's formula applied 
to $\varphi(t+x)$. For $t=0$ it yields
\begin{equation}\label{E:31}
\sum_{m\in\Z}\varphi(ma)=\sum_{m\in\Z}\frac{1}{a}\widehat\varphi\Bigl(\frac{m}{a}\Bigr),
\end{equation}
and using the knowledge about the support of $\varphi$, this implies
\begin{equation}\label{E:32}
a+2a\varphi(a)=\sum_{m\in\Z}\frac{1}{a}\widehat\varphi\Bigl(\frac{m}{a}\Bigr),
\qquad \text{for }\quad\frac12\le a\le 1.
\end{equation}

\section{Values at dyadic points.}
First we determine the values of $\varphi(1-2^{-n})$.
\begin{thm}\label{T:6}
For each natural number $n$ we have
\begin{equation}\label{E:33}
\int_0^1t^{n-1}\varphi(t)\,dt=(n-1)!\,2^{\binom{n}{2}}\,\varphi(1-2^{-n}).
\end{equation}
\begin{equation}\label{E:34}
\int_0^1t^{2n}\varphi(t)\,dt=\frac{c_n}{2}.
\end{equation}
where $c_n$ are the rational numbers that appear in the expansion \eqref{E:6} of 
$\varphi$.
\end{thm}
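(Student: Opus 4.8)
The plan is to treat the two formulas separately: \eqref{E:34} is essentially a restatement of the definition of the $c_k$, whereas \eqref{E:33} carries the real content.

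For \eqref{E:34}, I would start from \eqref{E:1}, use that $\varphi$ is even and supported in $[-1,1]$ to write $\widehat\varphi(z)=2\int_0^1\varphi(t)\cos(2\pi t z)\,dt$, expand $\cos(2\pi t z)$ in its Taylor series, and integrate term by term, which is legitimate since the series converges uniformly on $[0,1]$ for each fixed $z$. Matching the resulting coefficient of $(2\pi z)^{2k}$ against \eqref{E:6} gives $2\int_0^1 t^{2k}\varphi(t)\,dt=c_k$, which is exactly \eqref{E:34}.

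For \eqref{E:33}, the idea is to exploit the self-similarity of $\varphi$ through its iterated antiderivatives anchored at the right endpoint. Define $\psi_1(x)=\int_x^1\varphi(t)\,dt$ and $\psi_{j+1}(x)=\int_x^1\psi_j(t)\,dt$. The Cauchy formula for repeated integration identifies $\psi_n(0)=\frac{1}{(n-1)!}\int_0^1 t^{n-1}\varphi(t)\,dt$, so \eqref{E:33} is equivalent to $\psi_n(0)=2^{\binom n2}\varphi(1-2^{-n})$. The first step is to integrate condition (d) from $x$ to $1$ and use the support condition (a), which kills the $\varphi(2t+1)$ term on $[0,1]$, to obtain $\varphi(x)=2\int_x^1\varphi(2t-1)\,dt$; after the substitution $s=2t-1$ this yields the key identity $\psi_1(y)=\varphi\bigl(\tfrac{y+1}{2}\bigr)$ for $y\in[-1,1]$. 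I would then prove by induction that $\psi_j(y)=2^{\binom j2}\varphi\bigl(\tfrac{y+2^j-1}{2^j}\bigr)$: inserting the inductive hypothesis into $\psi_{j+1}(y)=\int_y^1\psi_j(t)\,dt$ and substituting $t=2^j s-(2^j-1)$ produces a Jacobian factor $2^j$, reduces the integral to $\psi_1$ evaluated at $\tfrac{y+2^j-1}{2^j}$, and, upon applying the base identity once more, gives the closed form with exponent $\binom j2+j=\binom{j+1}2$. Setting $y=0$ gives $\psi_n(0)=2^{\binom n2}\varphi(1-2^{-n})$, as required.

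The point I expect to be delicate is conceptual rather than computational: the functional equation (d) combined with the partition-of-unity relation \eqref{E:28} only produces tautologies, so one must extract genuine arithmetic content from the scale change in (d). The factor $2^{\binom n2}$ is precisely the accumulated product $2^1\cdot 2^2\cdots 2^{n-1}$ of the Jacobians arising from the substitution at each level of the iteration, and the appearance of the dyadic point $1-2^{-n}$ reflects that each integration halves the distance to the endpoint $1$. The only care needed is to check that the arguments $\tfrac{y+2^j-1}{2^j}$ remain in $[-1,1]$ throughout the induction, which holds because $y\in[-1,1]$ forces this argument into $[1-2^{1-j},1]$, so that the base identity $\psi_1(y)=\varphi\bigl(\tfrac{y+1}{2}\bigr)$ applies at every stage.
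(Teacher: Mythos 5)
Your proof is correct and is essentially the paper's own argument in mirror image: the paper proves \eqref{E:33} by building the iterated primitives of $\varphi$ anchored at $-1$, namely $f_k(t)=2^{\binom{k}{2}}\varphi\bigl(t/2^k-2^{-k}-\cdots-2^{-1}\bigr)$ (identified as primitives via condition (d) and the support condition, just as you identify $\psi_1(y)=\varphi\bigl(\tfrac{y+1}{2}\bigr)$), and then integrates by parts $n$ times on $[-1,0]$, which is exactly your induction $\psi_j(y)=2^{\binom{j}{2}}\varphi\bigl(\tfrac{y+2^j-1}{2^j}\bigr)$ combined with Cauchy's repeated-integration formula, transported from the right endpoint to the left by evenness of $\varphi$. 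For \eqref{E:34} the two arguments are also the same computation --- matching moments against the Taylor coefficients in \eqref{E:6} --- the paper writing $\int_{-1}^{1}e^{xt}\varphi(t)\,dt=\widehat\varphi\bigl(\tfrac{ix}{2\pi}\bigr)$ where you expand $\cos(2\pi tz)$ directly.
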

\begin{proof}
We can check, by differentiation, that in the sequence of functions 
\begin{gather*}
f_0(t)=\varphi(t),\quad f_1(t)=\varphi\Bigl(\frac{t}{2}-\frac{1}{2}\Bigr),\quad
f_2(t)=2\,\varphi\Bigl(\frac{t}{4}-\frac{1}{4}-\frac{1}{2}\Bigr),\\
f_k(t)=2^{\binom{k}{2}}\,\varphi\Bigl(\frac{t}{2^k}-\frac{1}{2^k}-\frac{1}{2^{k-1}}-
\cdots-\frac{1}{2}\Bigr)
\end{gather*}
each function is a primitive in $[-1,1]$ of the previous one and all vanish at the point
$t=-1$. So integrating by parts
\begin{multline*}
\int_0^1t^n\varphi(t)\,dt=(-1)^n \int_{-1}^0t^n\varphi(t)\,dt=
(-1)^n\int_{-1}^0 t^nf_0(t)\,dt\\
=-(-1)^n n \int_{-1}^0 t^{n-1}f_1(t)\,dt=
(-1)^n(-1)^n n! \int_{-1}^0 f_n(t)\,dt\\=n! f_{n+1}(0)=n!\, 2^{\binom{n+1}{2}}\,
\varphi(1-2^{-n-1}).
\end{multline*}
Moreover
\[\sum_{n=0}^\infty \frac{x^n}{n!}\int_{-1}^{+1}t^n\varphi(t)\,dt=
\int_{-1}^{+1}e^{xt}\varphi(t)\,dt=\widehat\varphi\Bigl(\frac{ix}{2\pi}\Bigr)=
\sum_{k=0}^\infty \frac{c_k}{(2k)!}x^{2k},\]
and this proves \eqref{E:34}.
\end{proof}
From the two formulas we obtain
\begin{equation}\label{E:35}
\varphi(1-2^{-2n-1})=\frac{2^{-\binom{2n+1}{2}}}{2 (2n)!}\frac{F_n}{(2n+1)(2n-1)\cdots1}
\prod_{k=1}^n(2^{2k}-1)^{-1},
\end{equation}
where $F_k$ are the integers defined in \eqref{E:8}.

We may compute in a similar way all the numbers $\varphi(1-2^{-n})$. With this objective
notice that
\begin{multline*}
\int_0^1\varphi(t)e^{-2\pi i xt}\,dt=\frac{1}{2\pi i x}+\int_0^1\varphi'(t)
\frac{e^{-2\pi i x t}}{2\pi i x}\,dt\\
=\frac{1}{2\pi i x}-\int_0^1 2\varphi(2t-1)
\frac{e^{-2\pi i x t}}{2\pi i x}\,dt
=\frac{1}{2\pi i x}\Bigl(1-e^{-\pi i x}\widehat\varphi\Bigl(\frac{x}{2}\Bigr)\Bigr).
\end{multline*}
Therefore
\begin{equation}\label{E:36}
\int_0^1e^{xt}\varphi(t)\,dt=\sum_{n=0}^\infty \frac{x^n}{n!}\int_0^1t^n\varphi(t)\,dt
=-\frac{1}{x}\Bigl(1-e^{\frac{x}{2}}
\widehat\varphi\Bigl(\frac{ix}{4\pi}\Bigr)\Bigr)
\end{equation}
from which we obtain $\varphi(1-2^{-n})$. Another way to compute these numbers is
to use 
\begin{equation}\label{E:37}
f(x)=1+x\int_0^1 e^{xt}\varphi(t)\,dt=e^{\frac{x}{2}}
\widehat\varphi\Bigl(\frac{ix}{4\pi}\Bigr),
\end{equation}
together with the fact that
\begin{equation}\label{E:38}
f(2x)=\frac{e^x-1}{x}f(x).
\end{equation}
Therefore 
\begin{equation}\label{E:39}
f(x)=\sum_{n=0}^\infty \frac{d_n}{n!}x^n,
\end{equation}
where $d_0=1$ and we have the recurrence
\begin{equation}\label{E:40}
(n+1)(2^n-1)d_n=\sum_{k=0}^{n-1}\binom{n+1}{k}d_k.
\end{equation}
It follows that there are integers $G_n$ such that
\begin{equation}\label{E:41}
d_n=\frac{G_n}{(n+1)!}\prod_{k=1}^n(2^k-1)^{-1}.
\end{equation}
The numbers $d_n$, equation \eqref{E:33} and
\begin{equation}\label{E:42}
d_n=n\int_0^1t^{n-1}\varphi(t)\,dt
\end{equation}
determine the values of $\varphi(1-2^{-n})$.

We may prove now the following theorem:
\begin{thm}\label{T:7}
The function $\varphi$ takes rational values at each dyadic point.
\end{thm}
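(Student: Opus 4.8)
The plan is to read off the value at a dyadic point from the probabilistic description in Theorem \ref{T:3} and to feed in the rationality of the moments supplied by Theorem \ref{T:6}. First I would reduce to the essential case. Since $\varphi$ is even and vanishes off $[-1,1]$, and since an even numerator lowers the level of a dyadic fraction, it is enough to treat a point $c=q/2^{n}\in(-1,0)$, the endpoints being trivial. Put $r=q+2^{n}\in\{1,\dots,2^{n}-1\}$. Then \eqref{E:20} of Theorem \ref{T:3} reads
\[
\varphi(c)=P\Bigl(\sum_{k=1}^{\infty}U_k2^{-k}\le \tfrac{r}{2^{n}}\Bigr),
\]
where $U_1,U_2,\dots$ are independent and uniformly distributed on $[0,1]$.

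Next I would exploit the self-similarity of the sum. Splitting off the first $n$ terms,
\[
\sum_{k\ge1}U_k2^{-k}=2^{-n}\bigl(W+S'\bigr),\qquad W=\sum_{k=1}^{n}U_k2^{\,n-k},\quad S'=\sum_{j\ge1}U_{n+j}2^{-j},
\]
where $W$ and $S'$ are independent and $S'$ has the same law as the full sum. Hence $\varphi(c)=P(W+S'\le r)$, and conditioning on $W$ gives
\[
\varphi(c)=P(W\le r-1)+\int_{r-1}^{r}F(r-w)\,f_W(w)\,dw,
\]
where $f_W$ is the density of $W$ and $F(s)=P(S'\le s)$. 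By Theorem \ref{T:3} and the evenness of $\varphi$ one has $F(s)=\varphi(s-1)=\varphi(1-s)$ for $s\in[0,1]$, so with the substitution $u=w-(r-1)$ the integral becomes $\int_{0}^{1}\varphi(u)\,Q(u)\,du$, where $Q(u)=f_W(r-1+u)$.

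The heart of the matter is that everything in sight is now rational. The variable $W$ is a sum of independent uniform variables on the integer-length intervals $[0,2^{\,n-k}]$, so its density $f_W$ is a piecewise polynomial of degree at most $n-1$ with rational coefficients and with breakpoints at integers; on the single integer slab $[r-1,r]$ it is therefore one rational polynomial, and $P(W\le r-1)$ is rational. Consequently $Q$ is a rational polynomial of degree at most $n-1$, and
\[
\int_{0}^{1}\varphi(u)Q(u)\,du=\sum_{i=0}^{n-1}q_i\int_{0}^{1}u^{\,i}\varphi(u)\,du
\]
is a finite rational combination of the moments $\int_0^1 u^i\varphi(u)\,du$, which are rational by \eqref{E:33} and \eqref{E:34} of Theorem \ref{T:6}. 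Adding the rational number $P(W\le r-1)$ shows $\varphi(c)\in\Q$.

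I expect the main obstacle to be the bookkeeping in the third paragraph: one must verify carefully that $f_W$ is genuinely a rational piecewise polynomial with integer breakpoints, so that $[r-1,r]$ lies inside a single polynomial piece and $P(W\le r-1)$ is rational, and that only the finitely many moments up to order $n-1$ intervene. The reductions of the first paragraph and the conditioning identity are routine once Theorem \ref{T:3} is available.
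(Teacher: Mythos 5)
Your route is genuinely different from the paper's: you extract $\varphi(q2^{-n})$ from the probabilistic representation of Theorem \ref{T:3} by splitting off the first $n$ uniform variables and conditioning on their sum $W$, whereas the paper writes $\varphi(q2^{-n})$ via Taylor's theorem with integral remainder based at $t=-1$, then uses the derivative formula of Theorem \ref{T:4}, $\varphi^{(n+1)}(x)=2^{\binom{n+2}{2}}\theta\bigl(2^{n+1}(1+x)\bigr)$, to unfold everything into moments of $\varphi$. Your reduction, the conditioning identity $\varphi(c)=P(W\le r-1)+\int_0^1\varphi(u)\,f_W(r-1+u)\,du$, and the density bookkeeping you flag as the main obstacle are all fine: since $W$ is a sum of independent uniforms on intervals of integer lengths $2^{n-k}$, the standard convolution formula
\[
f_W(w)=\frac{1}{2^{\binom{n}{2}}(n-1)!}\sum_{S\subseteq\{1,\dots,n\}}(-1)^{|S|}\max\Bigl(w-\sum_{k\in S}2^{n-k},\,0\Bigr)^{n-1}
\]
shows $f_W$ is piecewise polynomial of degree $n-1$ with rational coefficients and integer knots, so $P(W\le r-1)$ is rational and $Q(u)=f_W(r-1+u)$ is a single rational polynomial on $[0,1]$.

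The genuine gap is in your last step, where you assert that the moments $\int_0^1u^i\varphi(u)\,du$ are rational ``by \eqref{E:33} and \eqref{E:34}''. Equation \eqref{E:34} gives rationality only of the \emph{even} moments. For odd $i$, equation \eqref{E:33} does not give rationality at all: it converts the moment into $i!\,2^{\binom{i+1}{2}}\varphi(1-2^{-i-1})$, i.e.\ into a dyadic value of $\varphi$ of exactly the kind Theorem \ref{T:7} is trying to prove rational, so as cited your argument is circular. And odd moments really do enter your formula: for $c=-1/4$ one gets $\varphi(-1/4)=\tfrac34+\tfrac12\int_0^1(1-u)\varphi(u)\,du$, which requires knowing $\int_0^1u\varphi(u)\,du$ (it equals $5/36$, giving $\varphi(-1/4)=67/72$, in agreement with the paper's table). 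The gap is repairable entirely from the paper, but from the material \emph{after} Theorem \ref{T:6}: the recurrence \eqref{E:40} shows the numbers $d_n$ are rational, and \eqref{E:42} identifies $d_n=n\int_0^1t^{n-1}\varphi(t)\,dt$, so \emph{all} moments are rational; cite that instead. It is worth noting how the paper's own proof dodges this issue: it arranges the computation so that only integrals $\int_{-1}^1u^k\varphi(u)\,du$ over the symmetric interval appear, and these vanish for odd $k$, so only the even moments $c_k/2$ of \eqref{E:34} are ever needed.
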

\begin{proof}
Let $t=q/2^n$ with $|q|<2^n$. We compute $\varphi(q2^{-n})$.
Since $\varphi$ and all its derivatives vanish at the point $-1$,
Taylor's theorem with the rest in integral form gives us
\[\varphi(q2^{-n})=\int_{-1}^t \frac{(t-x)^n}{n!}\varphi^{(n+1)}(x)\,dx.\]
Applying our formula for the derivatives of $\varphi$  we obtain
\[\varphi(t)=\frac{1}{n!}2^{\binom{n+2}{2}}\int_{-1}^t(t-x)^n\theta(2^{n+1}(1+x))\,dx.\]
Since for $2h\le 2^{n+1}(1+x)\le 2(h+1)$ we have \[\theta(2^{2n+1}(1+x))=(-1)^{s(h)}
\varphi(2^{n+1}(1+x)-2h-1)\] and putting $2^{n+1}(1+x)-2h-1=u$ we obtain
\begin{multline*}
\varphi(t)=\frac{1}{n!}2^{\binom{n+2}{2}}2^{-n-1}\sum_{h=0}^{q+2^n-1}(-1)^{s(h)}
\int_{-1}^1\Bigl(t-\frac{u}{2^{n+1}}-\frac{2h+1}{2^{n+1}}+1\Bigr)^n\varphi(u)\,du\\
=\frac{1}{n!}2^{-\binom{n+1}{2}}\sum_{h=0}^{q+2^n-1}(-1)^{s(h)}
\int_{-1}^1\bigl(2(q-h)+2^{n+1}-1-u\bigr)^n\varphi(u)\,du\\
=\frac{1}{n!}2^{-\binom{n+1}{2}}\sum_{h=0}^{q+2^n-1}(-1)^{s(h)}\sum_{k=0}^n
\binom{n}{k}\bigl(2(q-h)+2^{n+1}-1\bigr)^{n-k}(-1)^k\int_{\R}u^k\varphi(u)\,du.
\end{multline*}
This formula, together with  equality
\[\int_{-1}^1u^n\varphi(u)\,du=\bigl(1+(-1)^n\bigr)\int_0^1u^n\varphi(u)\,du\]
and \eqref{E:34} proves our theorem, and we obtain
\[\varphi(q2^{-n})=
2\sum_{h=0}^{q+2^n-1}\sum_{k=0}^{\lfloor n/2\rfloor}(-1)^{s(h)}
\frac{2^{\binom{2k+1}{2}-\binom{n+1}{2}}}{(n-2k)!}
\bigl(2(q-h)+2^{n+1}-1\bigr)^{n-2k}\varphi(1-2^{-2k-1})\]
\end{proof}

For the computation we may  first obtain the common denominator of $\varphi(q2^{-n})$
for a fixed $n$, and using \eqref{E:30} it is possible then to compute the exact
value of $\varphi(q2^{-n})$. For $n=5$ the common denominator is $33\,177\,600=
2^{14}3^45^2$ and we obtain

\[\begin{array}{|r|r||r|r||r|r|}
\hline
q & 33\,177\,600\,\varphi(q/32) & q & 33\,177\,600\,\varphi(q/32) & q & 33\,177\,600\,\varphi(q/32) \\ \hline
0 & 33\,177\,600 & 11 & 26\,622\,019 & 22 & 4\,893\,712\\
1 & 33\,177\,581 & 12 & 24\,768\,000 & 23 & 3\,470\,381\\
2 & 33\,175\,312 & 13 & 22\,784\,381 & 24 & 2\,304\,000\\
3 & 33\,152\,381 & 14 & 20\,733\,712 & 25 & 1\,396\,781\\
4 & 33\,062\,400 & 15 & 18\,662\,381 & 26 & 746\,512\\
5 & 32\,842\,819 & 16 & 16\,588\,800 & 27 & 334\,781\\
6 & 32\,431\,088 & 17 & 14\,515\,219 & 28 & 115\,200\\
7 & 31\,780\,819 & 18 & 12\,443\,888 & 29 & 25\,219\\
8 & 30\,873\,600 & 19 & 10\,393\,219 & 30 & 2\,288\\
9 & 29\,707\,219 & 20 & 8\,409\,600 &  31 & 19\\
10 & 28\,283\,888 & 21 & 6\,555\,581 & 32 & 0\\ \hline
\end{array}\]
\begin{bibdiv}
\begin{biblist}

\bib{MR0151354}{book}{
   author={Bourbaki, N.},
   title={Fonctions d'une variable r\'eelle },
   publisher={Hermann, Paris},
   date={1958},
}

\bib{MR0188387}{book}{
   author={Hewitt, E.},
   author={Stromberg, K.},
   title={Real and abstract analysis},
   publisher={Springer-Verlag, Berlin},
   date={1965},
}

\bib{MR0205028}{book}{
   author={Horv{\'a}th, J.},
   title={Topological vector spaces and distributions. Vol. I},
   publisher={Addison-Wesley Publishing Co., Reading, Massachusetts},
   date={1966},
}

\bib{MR1501802}{article}{
   author={Jessen, B.},
   author={Wintner, A.},
   title={Distribution functions and the Riemann zeta function},
   journal={Trans. Amer. Math. Soc.},
   volume={38},
   date={1935},
   pages={48--88},
}

\end{biblist}
\end{bibdiv}

\end{document}